\documentclass[12pt,reqno]{amsart}

\usepackage{amsmath,amsfonts,amssymb,amsthm,mathtools}
\usepackage{hyperref}
\usepackage{gensymb}
\usepackage{graphicx}	
\usepackage{float}
\usepackage{color}
\usepackage{longtable}
\usepackage{pdfpages}
\usepackage{footnote}
\usepackage{appendix}
\usepackage{sidecap}
\usepackage{tabularx}
\usepackage{framed}
\makesavenoteenv{tabular}
\makesavenoteenv{table}
\usepackage{array}
\usepackage{booktabs}

\usepackage{tikz}
\usepackage{tikz-cd}
\usetikzlibrary{shapes.geometric,calc,positioning,3d,intersections,arrows}
\usetikzlibrary{decorations.markings}
\usepackage{pgfplots}

\textwidth=450pt
\textheight=602pt 
\calclayout

\renewenvironment{framed}[1][\hsize]
   {\MakeFramed{\hsize#1\advance\hsize-\width \FrameRestore}}%
   {\endMakeFramed}

\newcommand\A{{\mathcal A}}
\newcommand{\la}{\langle}
\newcommand{\ra}{\rangle}

\newcommand\ba{{\boldsymbol a}}
\newcommand\bb{{\boldsymbol b}}
\newcommand\bc{{\boldsymbol c}}
\newcommand\bd{{\boldsymbol d}}
\newcommand\bp{{\boldsymbol p}}
\newcommand\bq{{\boldsymbol q}}
\newcommand\bx{{\boldsymbol x}}
\newcommand\by{{\boldsymbol y}}
\newcommand\bz{{\boldsymbol z}}
\newcommand\bu{{\boldsymbol u}}

\newcommand\bT{{\boldsymbol T}}
\newcommand\C{{\mathbb C}}
\newcommand\g{\mathfrak{g}}

\newcommand\op[1]{\mathop{\rm #1}\nolimits}
\newcommand\p{\partial}

\newcommand\R{{\mathbb R}}
\newcommand\cF{{\mathcal F}}
\newcommand\cR{{\mathcal R}}
\newcommand\cS{{\mathcal S}}

\newcommand\Z{{\mathbb Z}}
\newcommand\com[1]{}

\newtheorem{thm}{Theorem}[section]
\theoremstyle{definition}

\newtheorem{theorem}[thm]{Theorem}

\newtheorem{proposition}[thm]{Proposition}

\newtheorem*{remark}{Remark}

\pgfplotsset{compat=1.15}

\begin{document}
\title{Joint Invariants of Linear Symplectic Actions}
\author{Fredrik Andreassen}
\author{Boris Kruglikov}
 \address{Institute of Mathematics and Statistics, UiT the Arctic University of Norway, Troms\o\ 90-37, Norway.
E-mails: \textsc{fredrik.andreassen@gmail.com,\ boris.kruglikov@uit.no}.}
\keywords{Polynomial and rational invariants, syzygy,
free resolution, discretization}
\subjclass[2010]{15A72, 13A50; 53A55}
 \maketitle

\begin{abstract}
We review computations of joint invariants on a linear symplectic space,
discuss variations for an extension of group and space
and relate this to other equivalence problems and approaches, most
importantly to differential invariants.
\end{abstract}

 \section{Introduction}\label{S1}

The classical invariant theory \cite{H,PV,Ol} investigates
polynomial invariants of linear actions
of a Lie group $G$ on a vector space $V$, i.e.\
describes the algebra $(S\,V^*)^G$. For instance, the case of binary forms
corresponds to $G=\op{SL}(2,\C)$ and $V=\C^2$; equivalently for
$G=\op{GL}(2,\C)$ one studies instead the algebra of relative invariants.
The covariants correspond to invariants in the tensor product $V\otimes W$
for another representation $W$. Changing to the Cartesian product
$V\times W$ leads to joint invariants of $G$.

In this paper we discuss joint invariants corresponding to the
(diagonal) action of $G$ on the iterated Cartesian product $V^{\times m}$
for increasing number of copies $m\in\mathbb{N}$.
We will focus on the case $G=\op{Sp}(2n,\R)$, $V=\R^{2n}$ and
discuss the conformal $G=\op{CSp}(2n,\R)=\op{Sp}(2n,\R)\times\R_+$ and
affine $G=\op{ASp}(2n,\R)=\op{Sp}(2n,\R)\ltimes\R^{2n}$ versions later.

This corresponds to invariants of $m$-tuples of points in $V$, i.e.\
finite ordered subsets.
By the Hilbert-Mumford \cite{H} and Rosenlicht \cite{R} theorems, the
algebra of polynomial invariants (for the semi-simple $G$) or the field
of rational invariants (in all other cases considered) can be interpreted
as the space of functions on the quotient space $V^{\times m}/G$.

For $G=\op{Sp}(2n,\C)$ the algebra of invariants is known \cite{W}.
Generators and relations (syzygies) are described in
the first and the second fundamental theorems respectively.
We review this in Theorem \ref{T1} (real version),
and complement by explicit examples of free resolutions of the algebra.
In addition, we describe the field of rational invariants.

We also discuss invariants with respect to the group
$G=\op{Sp}(2n,\R)\times S_m$, in which case considerably less is known.
Another generalization we consider is the field of invariants for the
conformal symplectic Lie group $G=\op{CSp}(2n,\R)$ on the contact space.

When approaching invariants of infinite sets, like curves or
domains with smooth boundary, the theory of joint invariants is
not directly applicable and the equivalence problem is solved via
differential invariants \cite{KL}. In the case of a group $G$ and 
a space $V$ as above this problem was solved in \cite{JK}. We claim 
that the differential invariants from this reference
can be obtained in a proper limits of joint invariants, i.e.\
via a certain discretization and quasiclassical limit,
and demonstrate it explicitly in several cases.

In this paper we focus on discussion of various interrelations
of joint invariants. In particular, at the conclusion
we note that joint invariants can be applied
to the equivalence problem of binary forms. Since these have been
studied also via differential invariants \cite{BL,Ol} a further link
to the above symplectic discretization is possible.

The relation to binary forms mentioned above is based on
the Sylvester theorem \cite{S},
which in turn can be extended to more general Waring decompositions,
important in algebraic geometry \cite{AH}.
Our computations should carry over to the general case.
This note is partially based on the results of \cite{FA},
generalized and elaborated in several respects.

\section{Recollection: Invariants}\label{S2}

We briefly recall the basics of invariant theory,
referring to \cite{MFK,PV} for more details.

Let $G$ be a Lie group acting on a manifold $V$. A point $x\in V$ is
regular if a neighborhood of the orbit $G\cdot x$ is fibred by $G$-orbits.
A point $x\in V$ is weakly regular, if its (not necessary $G$-invariant)
neighborhood is foliated by the orbits of the Lie algebra $\g=\op{Lie}(G)$.
In general the action can lack regular points, but a generic point
is weakly regular. For algebraic actions a Zariski open set of points is regular.

\subsection{Smooth invariants}

If $G$ and $V$ are only smooth (and non-compact),
there is little one can do to guarantee regularity a priori.
An alternative is to look for local invariants, i.e.\
functions $I=I(x)$ in a neighborhood $U\subset V$
such that $I(x)=I(g\cdot x)$ as long as $x\in U$ and $g\in G$ satisfy
$g\cdot x\in U$.

The standard method to search for such $I$ is by elimination of group
parameters, namely by computing quasi-transversals \cite{PV} or
using normalization and moving frame \cite{Ol}. Another way is
to solve the linear PDE system $L_\xi(I)=0$ for $\xi\in\g=\op{Lie}(G)$.

Given the space of invariants $\{I\}$ one can extend $U\subset V$ and
address regularity. In our case the invariants are easy to compute and
we do not rely on any of these methods, however instead we describe
the algebra and the field of invariants depending on specification of
the type of functions $I$.

\subsection{Polynomial invariants}

If $G$ is semi-simple and $V$ is linear, then by the Hilbert-Mumford theorem
generic orbits can be separated by polynomial invariants $I\in (S\,V^*)^G$,
where $S\,V^*=\oplus_{k=0}^\infty S^kV^*$ is the algebra of homogeneous polynomials on $V$.
With a choice of linear coordinates $\bx=(x_1,...,x_n)$ on $V$
we identify $S\,V^*=\mathbb{R}[\bx]$.

Moreover, by the Hilbert basis theorem, the algebra of polynomial invariants
$\A_G=(S\,V^*)^G$ is Noetherian, i.e.\ finitely generated by some
$\ba=(a_1,\dots,a_s)$, $a_j=a_j(\bx)\in\A_G$.

Denote by $\cR=\R[\ba]$ the free commutative $\R$-algebra generated 
by $\ba$. It forms a free module $F_0$ over itself.
$\A_G$ is also an $\cR$-module with surjective $\cR$-homomorphism
$\phi_0:F_0\to\A_G$, $\phi_0(a_j)=a_j(\bx)$.
The first syzygy module $S_1=\op{Ker}(\phi_0)$ fits the exact sequence		
 \[
0 \to S_1 \to F_0 \to \A_G \to 0.
 \]	
A {\it syzygy} is an element of $S_1$, i.e.\ a relation $r=r(\ba)$
between the generators of $\A_G$ of the form
$\sum_{p=1}^k r_{i_p}a_{j_p}=0$, $r_{i_p}\in\cR$.

The module $S_1$ is Noetherian, i.e.\ finitely generated by some
$\bb=(b_1,\dots,b_t)$. Denote the free $\cR$-module generated by
$\bb$ by $F_1=\cR[\bb]$. The natural homomorphism
$\phi_1:F_1\to S_1\subset F_0$, $\phi_1(b_j)=b_j(\ba)$, defines
the second syzygy module $S_2=\op{Ker}(\phi_1)$, and we can continue
obtaining $S_2\subset F_2=\cR[\bc]$, etc.
This yields the exact sequence of $\cR$-modules:
 \[
\dots \xrightarrow{\phi_3} F_2 \xrightarrow{\phi_2} F_1
\xrightarrow{\phi_1} F_0 \xrightarrow{\phi_0} \A_G \to 0.
 \]

The Hilbert syzygy theorem states that $q$-th module of syzygies $S_q$
is free for $q\ge s=\#\ba$. In particular, the minimal free
resolution exists and has length $\leq s$, see \cite{E}.

To emphasize the generating sets we depict free resolutions as follows:
 \[
\R[\bx] \supset \A_G \gets \R[\ba] \gets
\cR[\bb] \gets \cR[\bc] \gets \dots\gets 0.
\]

\subsection{Rational invariants}

If $G$ is algebraic, in particular reductive, then by the Rosenlicht theorem
\cite{R} generic orbits can be separated by rational invariants $I\in\cF_G$.
Here $\R(\bx)$ is the field of rational functions on $V$
and $\cF_G=\R(\bx)^G$.

Let $d$ be the transcendence degree of $\cF_G$. This means that
there exist $(a_1,\dots,a_d)=\bar{\ba}$, $a_j\in\cF_G$,
such that $\cF_G$ is an algebraic extension of $\R(\bar{\ba})$.
Then either $\cF_G=\R(\ba)$ for $\ba=\bar{\ba}$ or
$\cF_G$ is generated by a set $\ba\supset\bar{\ba}$,
which by the primitive element theorem can be assumed of cardinality
$s=\#\ba=d+1$, i.e.\ $\ba=(a_1,\dots,a_d,a_{d+1})$.
In the latter case there is one algebraic relation on $\ba$.
Note that $d\leq n$ because $\R(\bar{\ba})\subset\R(\bx)$.

We adopt the following convention for depicting this:
 \[
\R(\bx) \supset \cF_G \stackrel{\text{alg}}\supset \R(\bar{\ba}) \overset{d}{\supset} \R.	
 \]

\subsection{Our setup}

If the Lie group $G$ acts effectively on $V$, then for some $q$
it acts freely on $V^{\times q}$, and hence on all $V^{\times m}$ for
$m\ge q$.
 \com{
(The proof of this claim is inductive: the orbits of $G$ on $V$ are
non-constant, choose $x_1$ in weakly regular (= maximal dimension) orbit.
Its stabilizer $H_1=G_{x_1}$ acts effectively on $V$, but since the
first component of $\bx$ is fixed, we reduce to the action on
$V^{\times(q-1)}$. Choose $x_2$ in the second component weakly regular,
let $H_2=(H_1)_{x_1}=G_{x_1,x_2}$ and continue. The procedure will stop
as the dimension of the groups strictly decreases.)
 }
The number of rational invariants separating a generic orbit in
$V^{\times m}$ is equal to the codimension of the orbit.

It turns out that knowing all those invariants $I$
on $V^{\times q}$ is enough to generate
the invariants on $V^{\times m}$ for $m>q$. Indeed, let
$\pi_{i_1,\dots,i_q}:V^{\times m}\to V^{\times q}$ be the projection
to the factors $(i_1,\dots,i_q)$. Then the union of
$\pi^*_{i_1,\dots,i_q}I$ for $I$ from the field $\cF_G(V^{\times q})$
gives the generating set of the field $\cF_G(V^{\times m})$,
and similarly for the algebra of invariants.

Below we denote $\A_G^m=\A_G(V^{\times m})$ and $\cF_G^m=\cF_G(V^{\times m})$.

\subsection{The equivalence problem}

For a semi-simple Lie group $G$ the field $\cF_G$ is obtained from
the ring $\A_G$ by localization (field of fractions): $\cF_G=F(\A_G)$.
Hence we discuss a solution to the equivalence problem through rational invariants.

Let $I_1,\dots, I_s$ be the generating set of invariants of the action of
$G$ on $V^{\times q}$. If $s=d+1$, this set of generators is subject
to an algebraic condition, which constrains the generators to
an algebraic set $\Sigma\subset\R^s$. This is the signature space, cf.\ \cite{O2}.

Now the $q$-tuple of points $X=(\bx_1,\dots,\bx_q)$
is mapped to $I_1(X),\dots,I_s(X)\in\Sigma$. Denote this map by $\Psi$.
Two generic configurations of points $X',X''\in V^{\times q}$
are $G$-equivalent iff their signatures coincide $\Psi(X')=\Psi(X'')$.

\section{Invariants on Symplectic Vector Spaces}

Let $V=\R^{2n}(x^1,\dots,x^n,y^1,\dots,y^n)$ be equipped with the standard symplectic form $\omega =dx^1\wedge dy^1+\dots+dx^n\wedge dy^n$.
The group $G=\op{Sp}(2n,\R)$ acts almost transitively on $V$,
preserving the origin $O$.
Thus there are no continuous invariants of the action, $\cF_G^1=\R$.
The first invariant occurs already for two copies of $V$.
Namely for a pair of points $A_i,A_j\in V$
the double symplectic area of the triangle $OA_iA_j$ is
 \[
a_{ij}=\omega(OA_i,OA_j)= \bx_i\by_j-\bx_j\by_i =\sum^n_{k=1} x^k_iy^k_j-x^k_jy^k_i.
 \]

\subsection{The case $n=1$}\label{F:n=1}

Consider at first the case of dimension 2, where $V=\R^2(x,y)$,
$\omega = dx \wedge dy$. The invariant $a_{12}=x_1y_2-x_2y_1$ on $V\times V$
generates pairwise invariants $a_{ij}$ on $V^{\times m}$ for $m\ge2$
induced through the pull-back of the projection
$\pi_{i,j}:V^{\times m}\to V\times V$ to the corresponding factors.
Below we describe minimal free resolutions of $\A_G^m$ for $m\ge2$.

\subsubsection{$V\times V$}

Here the algebra is generated by one element, whence the resolution:
 \[
\R[x_1,x_2,y_1,y_2] \supset \A_G^2 \gets \R[a_{12}] \gets 0
 \]
In other words, $\A_G^2\simeq\cR:=\R[a_{12}]$. Note that
$\cF_G^2=\R(a_{12})$.

\subsubsection{$V^{\times 3}=V\times V\times V$}

Here the action is free on the level of $m=3$ copies of $V$ and we
get $3=\dim V^{\times 3}-\dim G$ independent invariants
$a_{12}$, $a_{13}$, $a_{23}$. They generate the entire algebra,
and we get the following minimal free resolution:
 \[
\R[x_1,x_2,x_3,y_1,y_2,y_3] \supset\A_G^3\gets\R[a_{12},a_{13},a_{23}]\gets0
 \]
Once again, $\A_G^3\simeq \cR:=\R[a_{12},a_{13},a_{23}]$. Also
$\cF_G^3=\R(a_{12},a_{13},a_{23})$.

\subsubsection{$V^{\times 4}$}

Here $\dim V^{\times 4}=8$, $\dim G=3$ and we have 6 invariants
$\ba=\{a_{ij}:1\leq i<j\leq4\}$. To obtain a relation,
we try eliminating the variables $x_1,x_2,x_3,x_4,y_1,y_2,y_3,y_4$,
but this fails with the standard \textsc{Maple} command.
Yet, using the transitivity of the $G$-action
we fix $A_1$ at $(1,0)$ and $A_2$ at $(0,a_{12})$, and then
obtain the only relation
 \[
b_{1234} := a_{12}a_{34} - a_{13}a_{24} + a_{14}a_{23} = 0
 \]
that we identify as the \textit{Pl\"ucker relation}.
Thus the first syzygy is a module over $\cR:=\R[\ba]$ with one generator,
hence the minimal free resolution is:
 \[
\R[\bx,\by] \supset \A_G^4 \gets \R[a_{12},a_{13},a_{14},a_{23},a_{24},a_{34}] \gets \R[b_{1234}] \gets 0.
 \]
For the field of rational invariants one of the generators is superfluous,
for instance we can resolve the relation $b_{1234} = 0$ for
$a_{34}=(a_{13}a_{24}-a_{14}a_{23})/a_{12}$, and get
 \[
\mathbb{R}(x_1,x_2,x_3,x_4,y_1,y_2,y_3,y_4) \supset
\cF_G^4 \simeq \mathbb{R}(a_{12},a_{13},a_{14},a_{23},a_{24}) \overset{5}{\supset} \mathbb{R}	
 \]

\subsubsection{$V^{\times 5}$}

The algebra of invariants $\A_G^5$ is generated by
$\ba=\{a_{ij}:1\leq i<j\leq5\}$. This time the number
of generators is 10, while codimension of the orbit is $10-3=7$.
Using the same method we obtain that the first syzygy module
is generated by the Pl\"ucker relations
 \[
b_{ijkl} := a_{ij}a_{kl} - a_{ik}a_{jl} + a_{il}a_{jk}=0.
 \]
We have 5 of those: $\bb=\{b_{ijkl}:1\leq i<j<k<l\leq5\}$.
Thus there should be relations among relations,
or equivalently second syzygies.
If $F_0=\R[\ba]=:\cR$	and $F_1=\cR[\bb]$ then this module
is $S_2=\op{Ker}(\phi_1:F_1\to S_1\subset F_0)$.
Using elimination of parameters we find that $S_2$ is generated by			
$\bc=\{c_i:1\leq i\leq5\}$ with
 \[
c_i := \sum_{j=1}^5(-1)^ja_{ij}b_{1\dots\check{\jmath}\dots5}.
 \]			
For instance,
$c_1= a_{12}b_{1345} - a_{13}b_{1245} + a_{14}b_{1235} - a_{15}b_{1234}$.
Then we look for relations between the generators $\bc$ of $S_2$,
defining the third syzygy module $S_3$. It is generated by one element
 \[
\begin{aligned}
d &:= (a_{23}a_{45} - a_{24}a_{35} + a_{25}a_{34})c_1 + (-a_{13}a_{45} + a_{14}a_{35} - a_{15}a_{34})c_2\\
&+(a_{12}a_{45} - a_{14}a_{25} + a_{15}a_{24})c_3 + (-a_{12}a_{35} + a_{13}a_{25} - a_{15}a_{23})c_4\\
&+(a_{12}a_{34} - a_{13}a_{24} + a_{14}a_{23})c_5 = 0.
\end{aligned}
 \]				
Thus the minimal free resolution of $\A_G^5$ is
(note that here, as well as in our other examples, the length of
the resolution is smaller than what the Hilbert theorem predicts):
 \[
\R[\bx,\by] \supset \A_G^5 \gets \R[\ba] \gets
\cR[\bb] \gets \cR[\bc] \gets \cR[\bd] \gets 0.
 \]

As before, to generate the field of rational invariants, we express
superfluous generators in terms of the others using the first syzygies. Namely, we express $a_{34},a_{35},a_{45}$ from the relations
$b_{1234},b_{1235},b_{1245}$; the other 2 syzygies follow from the
higher syzygies.
Removing these generators we obtain a set of 7 independent generators
$\bar{\ba} = \ba \setminus \{a_{34}, a_{35}, a_{45}\}$ whence
 \[
\R(\bx,\by)\supset\cF_G^5\simeq\R(\bar{\ba}) \overset{7}{\supset}\R.
 \]

\subsubsection{General $V^{\times m}$}

The previous arguments generalize straightforwardly to conclude
that $\A_G^m$ is generated by $\ba=\{a_{ij}:1\leq i<j\leq m\}$.
The first syzygy module is generated by the Pl\"ucker relations
$\bb=\{b_{ijkl}:1\leq i<j<k<l\leq m\}$. In other words we have:
 \[
\A_G^m=\langle\ba\,|\,\bb\rangle.
 \]

Similarly, the field of rational invariants is generated by $\ba$,
yet all of them except for $a_{1j},a_{2j}$ can be expressed (rationally)
through the rest via the Pl\"ucker relations $b_{12kl}$.
Denote $\bar{\ba}:=\{a_{12},a_{13},\dots,a_{1m},a_{23},\dots,a_{2m}\}$,
$\#\bar{\ba}=2m-3$. Then we get for $m\ge2$:
 \[
\R(\bx,\bp)\supset\cF_G^m\simeq\R(\bar{\ba})\overset{2m -3}{\supset}\R.	
 \]

\subsection{The general case: algebra of polynomial invariants}

Minimal free resolutions can be computed in many examples for $n\ge1$.
However, in what follows we restrict our attention to describing
generators/relations of $\A_G^m$.

Let us count the number of local smooth invariants. The action of
$G$ on $V$ is almost transitive, so the stabilizer of a nonzero point
$A_1$ has $\dim G_{A_1}=\binom{2n+1}2-2n=\binom{2n}2$.
For a generic $A_2$ there is only one invariant $a_{12}$
(the orbit has codimension 1) and
the stabilizer of $A_2$ in $G_{A_1}$ has
$\dim G_{A_1,A_2}=\binom{2n}2-(2n-1)=\binom{2n-1}2$.
For a generic $A_3$ there are two more new invariants $a_{13},a_{23}$
(the orbit has codimension $2+1=3$)
and the stabilizer of $A_3$ in $G_{A_1,A_2}$ has
$\dim G_{A_1,A_2,A_3}=\binom{2n-1}2-(2n-2)=\binom{2n-2}2$.
By the same reason for $k\leq 2n$ the stabilizer of a generic
$k$-tuple of points $A_1,\dots,A_k$ has
$\dim G_{A_1,\dots,A_k}=\binom{2n-k+1}2$. Finally for $k=2n$
the stabilizer of generic $A_1,\dots,A_{2n}$ is trivial.

Thus we get the expected number of invariants $a_{ij}$.
For $m\leq 2n+1$ there are no relations between them, and the first
comes at $m=2n+2$. These can be obtained by successively studying cases of increasing $n$ resulting in the {\em Pfaffian relation}:
 $$
b_{i_1i_2\dots i_{2n+1}i_{2n+2}}:=\op{Pf}(a_{i_pi_q})_{1\leq p,q\leq 2n+2} =0.
 $$

Recall that the Pfaffian of a skew-symmetric operator $S$
on $V$ with respect to $\omega$ is
$\op{Pf}(S)=\op{vol}_\omega(Se_1,\dots,Se_{2n})$
for any symplectic basis $e_i$ of $V$.
The properties of the Pfaffian are:
$\op{Pf}(S)^2=\det(S)$, $\op{Pf}(TST^t)=\det(T)\op{Pf}(S)$.
For $n=1$ we get
 \[
b_{1234}=
\op{Pf}\begin{pmatrix}
0 & a_{12} & a_{13} & a_{14} \\ -a_{12} & 0 & a_{23} & a_{24} \\
-a_{13} & -a_{23} & 0 & a_{34} \\ -a_{14} & -a_{24} & -a_{34} & 0
\end{pmatrix}=a_{12}a_{34}-a_{13}a_{24}+a_{14}a_{23}.
 \]
Similarly, for $n=2$ we get
 \begin{equation*}	
 \begin{aligned}
b_{123456}= &a_{12}a_{34}a_{56} - a_{12}a_{35}a_{46} + a_{12}a_{36}a_{45} - a_{13}a_{24}a_{56} + a_{13}a_{25}a_{46} -a_{13}a_{26}a_{45} +\\
&a_{14}a_{23}a_{56} - a_{14}a_{25}a_{36} + a_{14}a_{26}a_{35} - a_{15}a_{23}a_{46} + a_{15}a_{24}a_{36} - a_{15}a_{26}a_{34} + \\
&a_{16}a_{23}a_{45} - a_{16}a_{24}a_{35} + a_{16}a_{25}a_{34}=0.
 \end{aligned}
 \end{equation*}

Denote $\bb=\{b_{i_1i_2\dots i_{2n+1}i_{2n+2}}:
1\leq i_1<i_2<\dots<i_{2n+1}<i_{2n+2}\leq m\}$.

 \begin{theorem}\label{T1}
The algebra of $G$-invariants is generated by $\ba$ with
syzygies $\bb$:
 $$
\A_G^m=\langle\ba\,|\,\bb\rangle.
 $$
 \end{theorem}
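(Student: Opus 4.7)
The plan is to identify the categorical quotient $V^{\times m}/\!/G$ with a rank variety of skew-symmetric matrices and then invoke the classical ideal-theoretic description of that variety. Define the $G$-invariant map
\[
\mu\colon V^{\times m}\to\op{Mat}^{\text{skew}}_m(\R),\qquad
\mu(A_1,\dots,A_m)=(a_{ij}),
\]
which, when the $A_i$ are assembled as the columns of a $2n\times m$ matrix $X$, reads $\mu(X)=X^{t}JX$ with $J$ the matrix of $\omega$. The goal is to show that $\mu$ induces an isomorphism $\R[\ba]/\langle\bb\rangle \xrightarrow{\sim}\A_G^m$.

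First I would establish the generators half (First Fundamental Theorem) by showing $\mu$ separates generic $G$-orbits. If two configurations $(A_i)$ and $(A_i')$ yield the same Gram matrix, then the assignment $A_i\mapsto A_i'$ is well defined on $\op{span}(A_i)$ --- since any linear relation $\sum c_iA_i=0$ forces $\sum c_i a_{ij}=0$ and hence $\sum c_iA_i'=0$ --- and it preserves $\omega$, so by Witt's extension theorem for symplectic forms it extends to an element $g\in\op{Sp}(V)$. Orbit separation upgrades to generation of the invariant algebra after complexification to $\op{Sp}(2n,\C)$ on $V_\C^{\times m}$, via the standard GIT fact that for a reductive complex group the ring of invariants is the coordinate ring of the categorical quotient, together with descent: real polynomial invariants of $\op{Sp}(2n,\R)$ coincide with restrictions of holomorphic invariants for the complexified action.

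Next, for the relations (Second Fundamental Theorem), I identify the image of $\mu$ set-theoretically as the variety $Y_{2n}$ of skew $m\times m$ matrices of rank $\leq 2n$. The inclusion $\op{Im}\mu\subseteq Y_{2n}$ follows from $\op{rank}(X^tJX)\leq\op{rank}X\leq 2n$, and the reverse inclusion is obtained by bringing any skew matrix of rank $\leq 2n$ into symplectic normal form and reading off a preimage $X$. The Pfaffians $b_{i_1\dots i_{2n+2}}$ vanish on $Y_{2n}$ because every $(2n+2)\times(2n+2)$ principal minor of a rank-$\leq\!2n$ skew matrix is zero, and each such minor is the square of the corresponding principal Pfaffian.

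The main obstacle will be the passage from this set-theoretic vanishing to ideal-theoretic generation: one must prove that the ideal $I(Y_{2n})\subset\R[a_{ij}]$ is generated by $\bb$, not merely cut out the variety up to radical. This is the classical Pfaffian-ideal theorem of de Concini--Procesi, proved via standard monomial theory on symplectic tableaux or by Gr\"obner degeneration to a squarefree monomial ideal. Once this is granted, the three steps combine to give $\A_G^m\cong\R[\ba]/\langle\bb\rangle$ as claimed, with no further generators or first-level relations required.
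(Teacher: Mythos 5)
Your route is genuinely different from the paper's. The paper proves generation by an explicit symplectic Gram--Schmidt normalization: a generic configuration is moved to a canonical $2n\times m$ matrix whose entries are rational expressions in the $a_{ij}$, which immediately shows that $\ba$ generates the field $\cF_G^m$, after which the paper passes (somewhat tersely, by ``clearing denominators'') to the polynomial algebra; completeness of the Pfaffian syzygies is argued from the same normal form. You instead identify the quotient with the variety $Y_{2n}$ of skew-symmetric $m\times m$ matrices of rank $\leq 2n$ via $\mu(X)=X^tJX$, obtain orbit separation from Witt's extension theorem, and delegate the ideal-theoretic statement to the de Concini--Procesi Pfaffian theorem. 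Yours is the classical FFT/SFT argument and, once the cited theorem is granted, delivers a cleaner and in fact stronger conclusion (the full ideal of relations, not merely set-theoretic vanishing of the $b$'s); the paper's normalization is more elementary and self-contained, and it also produces the explicit rational formulas that are reused later in the construction of $\cF_G^m$.

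One inference in your first step does not hold as stated: separation of generic (or even all closed) orbits by a subalgebra $A\subset\A_G^m$ does not imply $A=\A_G^m$. For instance $\R[x^2,x^3]\subset\R[x]$ separates all points of the line under the trivial group action, yet is a proper subalgebra. Orbit separation only yields equality of fields of fractions, i.e., generation of the \emph{rational} invariants --- which is exactly what the paper's normalization gives directly. To upgrade to the polynomial statement you need Igusa's criterion or an equivalent: the candidate quotient $Y_{2n}$ must be normal and the complement of the image of $V_\C^{\times m}$ in $Y_{2n}$ must have codimension at least $2$; both facts are true for the Pfaffian rank variety and belong to the same de Concini--Procesi circle of results you already invoke for the relations. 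So the gap is repairable with ingredients you have on the table, but as written the sentence ``orbit separation upgrades to generation of the invariant algebra \dots via the standard GIT fact'' is not a proof and should be replaced by an explicit appeal to normality of $Y_{2n}$ plus the codimension condition (or simply by citing the First Fundamental Theorem for $\op{Sp}(2n,\C)$ directly).
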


 \begin{proof}
Let us first prove that the invariants $a_{ij}$ generate the field
$\cF_G^m$ of rational invariants for $m=2n$. We use the symplectic analog of
Gram-Schmidt normalization: 
given  points $A_1,\dots,A_{2n}$ in general position,
we normalize them using $G=\op{Sp}(2n,\R)$ as follows.

Let $e_1,\dots,e_{2n}$ be a symplectic basis of $V$, i.e.\ 
$\omega(e_{2k-1},e_{2k})=1$ and $\omega(e_i,e_j)=0$ else.
At first $A_1$ can be mapped to the vector $e_1$.
The point $A_2$ can be mapped to the line $\R e_2$, and 
because of $\omega(OA_1,OA_2)=a_{12}$
it is mapped to the vector $a_{12}e_2$. Next in mapping $A_3$
we have two constraints $\omega(OA_1,OA_3)=a_{13}$, $\omega(OA_2,OA_3)=a_{23}$, and the point can be mapped to the space
spanned by $e_1,e_2,e_3$ satisfying those constraints.
Continuing like this we arrive to the following matrix with columns $OA_i$:
 $$
\begin{pmatrix}
1 & 0 & -\frac{a_{23}}{a_{12}} & -\frac{a_{24}}{a_{12}} & \dots &
-\frac{a_{2,2n-1}}{a_{12}} & -\frac{a_{2,2n}}{a_{12}} \\
0 & a_{12} & a_{13} & a_{14} & \dots & a_{1,2n-1} & a_{1,2n} \\
0 & 0 & 1 & 0 & \dots & * & * \\
0 & 0 & 0 & \frac{b_{1234}}{a_{12}} & \vdots & * & * \\
\vdots & \vdots & \vdots & \vdots & \ddots & \vdots & \vdots \\
0 & 0 & 0 & 0 & \dots & 1 & 0 \\
0 & 0 & 0 & 0 & \dots & 0 & a_{2n-1,2n}
\end{pmatrix}
 $$
where $b_{1234}=a_{12}a_{34}-a_{13}a_{24}+a_{14}a_{23}$
(this does not vanish in general if $n>1$) and
by $*$ we denote some rational expressions in $a_{ij}$
that do not fit the table.

If $m<2n$ then only the first $m$ columns of this matrix have to be kept.
If $m>2n$ then the remaining points $A_{2n+1},\dots,A_m$ have all their
coordinates invariant as the stabilizer of the first $2n$ points is
trivial. Thus the invariants are expressed rationally in $a_{ij}$.

To obtain polynomial invariants one clears the denominators
in these rational expressions, and so
$\A_G^m$ is generated by $\ba$ as well.

Now the Pfaffian of the skew-symmetric matrix $(a_{ij})_{2k\times 2k}$
is the square root of the determinant of the Gram matrix
of the vectors $OA_i$, $1\leq i\leq k$, with respect to $\omega$.
If we take $k=n+1$ then the vectors
are linearly dependent and therefore the Pfaffian vanishes.
Thus $\bb$ are syzygies among the generators $\ba$.
That they form a complete set follows from the
same normalization procedure as above.
 \end{proof}

 \begin{remark}
Theorem \ref{T1} is basically known: 
H.\ Weyl described the generators $\ba$ as the first fundamental theorem;
his second fundamental theorem gives not only the syzygy denoted above
by $\bb$, but also several different Pfaffians of larger sizes. Namely he lists in \cite[VI.1]{W} the syzygies $b_{i_1\dots i_{2n+2k}}:=
\op{Pf}(a_{i_pi_q})_{1\leq p,q\leq 2n+2k}=0$, $1\leq k\leq n$.
Those however are abundant. For instance,
in the simplest case $n=2$
 $$
b_{12345678}=a_{12}b_{345678}-a_{13}b_{245678}+a_{14}b_{235678}
-a_{15}b_{234678}+a_{16}b_{234578}-a_{17}b_{234568}+a_{18}b_{234567}.
 $$
In general the larger Pfaffians can be expressed via the smallest
through the expansion by minors \cite{IW} (this fact was
also noticed in \cite{V}). Here is the corresponding Pfaffian identity
(below we denote $S_{2n+1}=\{\sigma\in S_{2n+2}:
\sigma(1)=1\}$)
 \[
b_{i_1i_2\dots i_{2n+1}i_{2n+2}}=
\frac1{n!}\sum_{\sigma\in S_{2n+1}}(-1)^{\op{sgn}(\sigma)}
a_{i_1i_{\sigma(2)}}b_{i_{\sigma(3)}\dots i_{\sigma(2n+2)}}.
 \]

In \cite[\S9.5]{PV} another set of syzygies was added:
$q_{i_1\dots i_{4n+2}}=\det(a_{i_s,i_{t+2n+1}})_{s,t=1}^{2n+1}=0$.
These are also abundant, and should be excluded.
For instance, for $n=1$ we get
$q_{123456}=
a_{12}b_{3456}-a_{34}b_{1256}+a_{35}b_{1246}-a_{36}b_{1245}$.
 \end{remark}

\subsection{The general case: field of rational invariants}\label{Finv}

Since $G$ is simple, the field of rational invariants is the field
of fractions of the algebra of polynomial invariants: $\cF_G^m=F(\A_G^m)$.
To obtain its basis one can use the syzygies $b_{i_1\dots i_{2n+2}}=0$
to express all invariants through 
$\bar{\ba}=\{a_{ij}:1\leq i\leq 2n;\,i<j\leq m\}$.

This can be done rationally (with $b_{1\dots 2n}\not\equiv0$
in the denominator), for instance for $n=2$
we can express $a_{56}$ from the syzygy $b_{123456}=0$ as follows:
 \begin{multline*}
a_{56} = (a_{12}a_{35}a_{46} - a_{12}a_{36}a_{45} - a_{13}a_{25}a_{46} + a_{13}a_{26}a_{45} + a_{14}a_{25}a_{36} - a_{14}a_{26}a_{35}
+ a_{15}a_{23}a_{46} \hphantom{aaa} \\
\hphantom{aaaaa} -  a_{15}a_{24}a_{36} + a_{15}a_{26}a_{34} - a_{16}a_{23}a_{45} + a_{16}a_{24}a_{35} - a_{16}a_{25}a_{34})
 /(a_{12}a_{34} -a_{13}a_{24} + a_{14}a_{23}).
 \end{multline*}

In general we have $\#\bar{\ba}= 2nm-n(2n+1)$ for for $m\ge 2n$, in summary:
\begin{framed}[200pt]
\vspace{-5pt}
 \[
\R(\bx,\by)\supset\cF_G^m\simeq\R(\bar{\ba}) \overset{d(m,n)}{\supset}\R,
 \]
\end{framed}
where
 $$
d(m,n)=\left\{
\begin{array}{ll} 2nm-n(2n+1) & \text{for }m\ge2n\\
\binom{m}2 &\text{for }m\leq2n.\end{array}\right.
 $$

\section{Variation on the group and space}

Let us consider inclusion of symmetrization, scaling and translations to
the transformation group $G$. We also discuss contactization of the action.

\subsection{Symmetric joint invariants}

Invariants of the extended group $\hat G=\op{Sp}(2n,\R)\times S_m$ on $V^{\times m}$
are equivalent to $G$-invariants on configurations of unordered sets of points
$V^{\times m}/S_m$ (which is an orbifold).
Denote the algebra of polynomial $\hat G$-invariants on $V^{\times m}$
by $\cS^m_G\subset\A_G^m$. The projection $\pi:\A_G^m\to\cS^m_G$
is given by
 $$
\pi(f)=\frac1{m!}\sum_{\sigma\in S_m}\sigma\cdot f.
 $$
As a Noetherian algebra $\cS^m_G$ is finitely generated, yet
it is not easy to establish its generating set explicitly.
All linear terms average to zero, $\pi(a_{ij})=0$, but there are several
invariant quadratic terms in terms of the
homogeneous decomposition $\A_G^m=\oplus_{k=0}^\infty\A^m_k$.

For example, for $n=1$, $m=4$ we have $\A^4_0=\R$,
$\A^4_1=\R^6=\langle a_{12},a_{13},a_{14},a_{23},a_{24},a_{34}\rangle$,
$\A^4_2=\R^{20}$ (21 monomials $a_{ij}a_{kl}$
modulo 1 Pl\"ucker relation), etc.
Then $\pi(\A^4_0)=\R$, $\pi(\A^4_1)=0$, and $\pi(\A^4_2)=\R^2$ has
generators
 \begin{align*}
6\pi(a_{12}^2) \,&= a_{12}^2+a_{13}^2+a_{14}^2+a_{23}^2+a_{24}^2+a_{34}^2,\\
12\pi(a_{12}a_{13}) \,&= a_{12}a_{13}+a_{12}a_{14}+a_{13}a_{14}
-a_{12}a_{23}-a_{12}a_{24}+a_{23}a_{24}\\
&+a_{13}a_{23}-a_{13}a_{34}-a_{23}a_{34}
+a_{14}a_{24}+a_{14}a_{34}+a_{24}a_{34}.
 \end{align*}

 \begin{theorem}
The field of symmetric rational invariants $\mathfrak{F}^m_G=\pi(\cF^m_G)$
is the field of fractions $\mathfrak{F}^m_G=F(\cS^m_G)$ and its
transcendence degree is $d(m,n)$.
 \end{theorem}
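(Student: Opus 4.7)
The plan is to establish the two parts of the theorem separately using standard facts from the invariant theory of finite groups. For the identification $\mathfrak{F}^m_G = F(\cS^m_G)$, note that since the $G$- and $S_m$-actions on $V^{\times m}$ commute, the averaging operator $\pi$ extends naturally to rational $G$-invariants and satisfies $\pi(\cF^m_G) \subseteq (\cF^m_G)^{S_m}$; any $S_m$-invariant $f$ is fixed by $\pi$, so in fact $\mathfrak{F}^m_G = (\cF^m_G)^{S_m}$. The inclusion $F(\cS^m_G) \subseteq \mathfrak{F}^m_G$ is immediate from $\cS^m_G = (\A_G^m)^{S_m}$. For the reverse, given $f = p/q$ with $p,q \in \A_G^m$, clearing denominators gives
\[
\pi(f) \;=\; \frac{\tfrac{1}{m!}\sum_{\sigma\in S_m}(\sigma\cdot p)\prod_{\tau\ne\sigma}(\tau\cdot q)}{\prod_{\tau\in S_m}(\tau\cdot q)},
\]
and a direct reindexing $\sigma\mapsto\rho\sigma$, $\tau\mapsto\rho\tau$ shows that both numerator and denominator are $S_m$-invariant; since all factors $\sigma\cdot p,\,\tau\cdot q$ are elements of $\A_G^m$, the whole numerator and denominator lie in $\cS^m_G$, and hence $\pi(f)\in F(\cS^m_G)$.

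For the transcendence-degree claim, I would show that $\cF^m_G/\mathfrak{F}^m_G$ is an algebraic extension: every $f\in\cF^m_G$ is a root of the monic polynomial
\[
P(X) \;=\; \prod_{\sigma\in S_m}(X-\sigma\cdot f) \;\in\; \mathfrak{F}^m_G[X],
\]
whose coefficients are the elementary symmetric functions of the orbit $\{\sigma\cdot f\}_{\sigma\in S_m}$ and are therefore $S_m$-invariant. Since algebraic extensions preserve transcendence degree over $\R$, we conclude
\[
\operatorname{tr.deg}_\R \mathfrak{F}^m_G \;=\; \operatorname{tr.deg}_\R \cF^m_G \;=\; d(m,n),
\]
the last equality being the framed formula in Section~\ref{Finv}.

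The only real subtlety is conceptual rather than technical: one must interpret $\pi$ consistently on rational $G$-invariants (via the diagonal $S_m$-action on $V^{\times m}$ extended to $\R(\bx,\by)$) and verify that the common-denominator identity really does produce $S_m$-invariant numerator and denominator. Once this is granted, the two steps reduce respectively to the classical ``fraction field of invariants equals invariants of fractions'' lemma and to the fact that the invariant subfield of a finite group action is always an algebraic subextension.
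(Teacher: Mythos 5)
Your proof is correct. The paper itself does not spell out an argument: its ``proof'' is a one-line citation to general theorems in Springer's \emph{Invariant Theory} (\S2.5) together with the discussion in Section~\ref{S2}. What you have done is supply, in full, exactly the two classical facts that citation stands for: (i) for a finite group acting on a domain, the fraction field of the invariant subring equals the invariant subfield of the fraction field --- your common-denominator identity with the reindexing $\sigma\mapsto\rho\sigma$ is the standard proof, and it correctly uses the paper's earlier observation that $\cF^m_G=F(\A^m_G)$ to write any element as $p/q$ with $p,q\in\A^m_G$; and (ii) the invariant subfield of a finite group action is an algebraic (indeed finite) subextension, via the orbit polynomial $\prod_{\sigma}(X-\sigma\cdot f)$, so transcendence degree is preserved and equals the $d(m,n)$ established in \S\ref{Finv}. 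The identification $\pi(\cF^m_G)=(\cF^m_G)^{S_m}$, which you state up front, is the key bookkeeping step making both halves apply, and it holds because the $S_m$- and $G$-actions commute. So the route is the same in substance, but self-contained where the paper defers to the literature; the only thing the citation buys the authors is brevity, while your version makes the role of the Reynolds operator $\pi$ and of finiteness of $S_m$ explicit. No gaps.
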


 \begin{proof}
This follows from general theorems \cite[\S2.5]{Sp} and discussion in Section \ref{S2}.
 \end{proof}

The last statement can be made more constructive: Let $\ell$ numerate indices
$(ij)$ of the basis $\bar\ba$ of $\cF^m_G$ as in \S\ref{Finv}, $1\leq\ell\leq d=d(m,n)$.
One can check that $q_k=\pi(\prod_{\ell\leq k}a_\ell^2)$ 
are algebraically independent. Thus, denoting $\bq=(q_1,\dots,q_d)$ we obtain the presentation
 \begin{framed}[200pt]
\vspace{-5pt}
 \[
\R(\bx,\by)\supset\mathfrak{F}_G^m\stackrel{\text{alg}}\supset \R(\bq) \overset{d(m,n)}{\supset}\R.
 \]
 \end{framed}

Here is an algorithm to obtain generators of $\cS^m_G$.

 \begin{proposition}
Fix an order on generators $a_{ij}$ of $\A^m_G$, and induce the
total lexicographic order on monomials $a^\sigma\in\mathcal{R}=\R[\ba]$.
Let $\Sigma$ be the Gr\"obner basis of the 
$\cR$-ideal generated by $\pi(a^\sigma)$.
Then elements $\pi(a^\sigma)$, contributing to $\Sigma$, generate $\cS^m_G=\pi(\A^m_G)$.
 \end{proposition}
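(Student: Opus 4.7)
The plan is to use the Reynolds operator $\pi$ as an $\cS^m_G$-module map, converting an ideal-theoretic Gr\"obner reduction into a subalgebra-theoretic identity, then induct on total degree in $\ba$. Since $\pi$ is $\R$-linear and projects onto $\cS^m_G$, and $\A^m_G$ is $\R$-spanned by the monomials $a^\sigma$, the algebra $\cS^m_G$ is $\R$-spanned by the family $\{\pi(a^\sigma)\}$. The ring $\cR$, the ideal $J=\langle\pi(a^\sigma)\rangle_\cR$, and the map $\pi$ are all graded by total degree in $\ba$. Define $\mathcal{G}\subset\{\pi(a^\sigma)\}$ to be the subfamily contributing to $\Sigma$, i.e.\ those $\pi(a^\sigma)$ whose leading monomial is a minimal generator of the monomial ideal $\langle\mathrm{LM}(\pi(a^\sigma)):\sigma\rangle$; by Dickson's lemma this subfamily is finite. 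Write $\mathcal{A}\subset\cS^m_G$ for the $\R$-subalgebra generated by $\mathcal{G}$; the assertion is $\mathcal{A}=\cS^m_G$.

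We induct on $d=|\tau|$ to show every $\pi(a^\tau)$ belongs to $\mathcal{A}$. The cases $d\le 1$ are immediate from $\pi(1)=1$ and $\pi(a_{ij})=0$. For $d\ge 2$, either $\pi(a^\tau)\in\mathcal{G}$ or, by the defining property of $\mathcal{G}$, $\mathrm{LM}(\pi(a^\tau))$ is divisible by $\mathrm{LM}(g)$ for some $g\in\mathcal{G}$; iterating the standard reduction then yields a homogeneous certificate
\[
\pi(a^\tau)=\sum_{j}h_j\,g_j,\qquad g_j\in\mathcal{G},\ h_j\in\cR\ \text{homogeneous of degree}\ d-|g_j|.
\]
The key identity $\pi(fg)=\pi(f)\,g$ for any $g\in\cS^m_G$, immediate from $\sigma(fg)=\sigma(f)\sigma(g)$ together with $\sigma(g)=g$, lets one apply $\pi$ to both sides and obtain $\pi(a^\tau)=\sum_j\pi(h_j)\,g_j$. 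Each $\pi(h_j)\in\cS^m_G$ has strictly smaller degree $d-|g_j|<d$, since every $g_j$ is a non-constant symmetrization with $|g_j|\ge 2$. By the inductive hypothesis $\pi(h_j)\in\mathcal{A}$, hence $\pi(a^\tau)\in\mathcal{A}$.

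The main obstacle is the gap between ideal-theoretic and subalgebra-theoretic expressions: a priori the reduction coefficients $h_j$ live in $\cR$ and need not be symmetric. The identity $\pi(fg)=\pi(f)g$ is precisely what bridges this gap, symmetrizing each $h_j$ without changing its product with the invariant $g_j$. A secondary point worth flagging is that $\Sigma$ may contain S-polynomial remainders that are not themselves of the form $\pi(a^\sigma)$; these contribute to $\Sigma$ as a Gr\"obner basis in $\cR$ but never enter the subalgebra computation, because by construction the reduction of any $\pi(a^\tau)$ can be arranged using only elements of $\mathcal{G}$.
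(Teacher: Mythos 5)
Your second half is exactly the argument the paper appeals to: the Reynolds identity $\pi(fg)=\pi(f)\,g$ for $g\in\cS^m_G$, applied to a homogeneous ideal-membership certificate and combined with induction on total degree (using that $\pi$ kills degree one, so the symmetrized coefficients drop strictly in degree), is Hilbert's classical finiteness argument, and that part is correct as you present it.

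The gap is in the first half, where you must actually produce the certificate $\pi(a^\tau)=\sum_j h_j g_j$ with $g_j\in\mathcal G$. You define $\mathcal G$ as those $\pi(a^\sigma)$ whose leading monomial is a minimal generator of the monomial ideal $\langle \op{LM}(\pi(a^\sigma))\rangle$, and assert that iterated reduction of $\pi(a^\tau)$ by $\mathcal G$ terminates with zero remainder. That is unjustified: after one reduction step the intermediate polynomial lies in the ideal $J$ but is no longer of the form $\pi(a^{\tau'})$, and its leading monomial is only guaranteed to be divisible by a leading monomial of the full Gr\"obner basis $\Sigma$, which in general contains S-polynomial remainders with genuinely new leading monomials; division by $\mathcal G$ alone can stall at a nonzero remainder. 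Equivalently, your claim amounts to $\langle\mathcal G\rangle_{\cR}=J$, which fails for general families: for $\{x^2,\ x^2y+y^2\}$ in lex order with $x>y$ your recipe selects $\{x^2\}$, yet $x^2y+y^2\notin(x^2)$. Your closing remark that ``the reduction of any $\pi(a^\tau)$ can be arranged using only elements of $\mathcal G$'' is precisely the missing statement, and you invoke no special structure of the family $\{\pi(a^\sigma)\}$ to establish it. The paper's reading avoids the issue: the $\pi(a^\sigma)$ are fed into the Gr\"obner computation in increasing total degree, and the ``contributing'' ones are those retained because they do not reduce to zero modulo the basis already constructed; such a subfamily generates the ideal $\cR\cdot\pi(\A_G^m)$ by construction, after which your Reynolds/induction step applies verbatim. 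To repair your write-up, either adopt that algorithmic definition of $\mathcal G$, or supply a separate proof that every discarded $\pi(a^\sigma)$ lies in the ideal generated by your $\mathcal G$.
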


 \begin{proof}
Note that the algorithm proceeds in total degree of $a^\sigma$ until
the Gr\"obner basis stabilizes. 
That the involved $\pi(a^\sigma)$ generate $\cS^m_G$ as an algebra
(initially they generate the ideal $\cR\cdot\pi(\A^m_G)\subset\A^m_G$)
follows from the same argument as in the proof of Hilbert's theorem on invariants \cite{H}. (The above $\pi$ is the Reynolds operator used there.)
 \end{proof}

Let us illustrate how this works in the first nontrivial case $m=3$,
for any $n$.

In this case the graded components of $\cS_G^3=\pi(\A_G^3)$
have the following dimensions:
$\dim\cS^3_0=1$, $\dim\cS^3_1=0$, $\dim\cS^3_2=2$, $\dim\cS^3_3=1$,
$\dim\cS^3_4=4$, $\dim\cS^3_5=2$, $\dim\cS^3_6=7$, etc,
encoded into the Poincar\'e series
 $$
P^3_{\cS}(z)=1+2z^2+z^3+4z^4+2z^5+7z^6+4z^7+10z^8+7z^9+{}\dots{}
 =\frac{1+z^4}{(1-z^2)^2(1-z^3)}.
 $$

For the monomial order $a_{12}>a_{13}>a_{23}$ the invariants
 \begin{gather*}
I_{2a} = 3\pi(a_{12}^2)= a_{12}^2+a_{13}^2+a_{23}^2,\quad
I_{2b} = 3\pi(a_{12}a_{13})= a_{12}a_{13}-a_{12}a_{23}+a_{13}a_{23},\\
I_3 = 6\pi(a_{12}^2a_{13})= a_{12}^2(a_{13}+a_{23})-a_{23}^2(a_{12}+a_{13})+a_{13}^2(a_{12}-a_{23}),\\
I_4 = 3\pi(a_{12}^2a_{13}^2)= a_{12}^2a_{13}^2+
a_{12}^2a_{23}^2+a_{13}^2a_{23}^2
 \end{gather*}
generate a Gr\"obner basis of the ideal $\cR\cdot\pi(A^m_G)$
with the leading monomials of the corresponding Gr\"obner basis equal:
$a_{12}^2$, $a_{12}a_{13}$, $a_{13}^3$, $a_{12}a_{23}^3$,
$a_{13}^2a_{23}^2$, $a_{13}a_{23}^3$, $a_{23}^4$.

The Gr\"obner basis also gives the following syzygy $R_8$:
 $$
(4I_{2a}^2+4I_{2a}I_{2b}+3I_{2b}^2)I_{2b}^2
-(8I_{2a}^2+4I_{2a}I_{2b}+14I_{2b}^2)I_4+4(I_{2a}-2I_{2b})I_3^2+27I_4^2=0.
 $$
In other words, $\cS^m_G=\langle I_{2a},I_{2b},I_3,I_4\,|\,R_8\rangle$.
We also derive a presentation of the field of rational invariants
($2:1$ means quadratic extension)
 $$
\R(\bx,\by) \supset\mathfrak{F}_G^3 
\overset{2:1}{\supset}\R(I_{2a},I_{2b},I_3)\overset{3}{\supset}\R.	
 $$

\subsection{Conformal and affine symplectic groups}

For the group $G_1=\op{CSp}(2n,\R)=\op{Sp}(2n,\R)\times\R_+$
the scaling makes the
invariants $a_{ij}$ relative, yet of the same weight, so their ratios
$[a_{12}:a_{13}:\dots:a_{m-1,m}]$ or simply the invariants
$I_{ij}=\frac{a_{ij}}{a_{12}}$ are absolute invariants.
These generate the field of invariants of transcendence degree
$d(m,n)-1$.

\smallskip

For the group $G_2=\op{ASp}(2n,\R)=\op{Sp}(2n,\R)\ltimes\R^{2n}$
the translations do not preserve the origin $O$ and this makes
$a_{ij}$ non-invariant. However due to the formula
2$\omega(A_1A_2A_3)=a_{12}+a_{23}-a_{13}$
(or more symmetrically: $a_{12}+a_{23}+a_{31}$),
with the proper orientation of the triangle $A_1A_2A_3$,
we easily recover the absolute invariants
$a_{ij}+a_{jk}+a_{ki}$.

Alternatively, using the translational freedom, we can move the point
$A_1$ to the origin $O$. Then its stabilizer in $G_2$ is $G=\op{Sp}(2n,\R)$
and we compute the invariants of $(m-1)$ tuples of points
$A_2,\dots,A_m$ as before. In particular they
generate the field of invariants of transcendence degree
$d(m-1,n)$.

\subsection{Invariants in the contact space}\label{cont}

Infinitesimal symmetries of the contact structure $\Pi=\op{Ker}(\alpha)$, $\alpha=du-\by\,d\bx$ in the contact space $M=\R^{2n+1}(\bx,\by,u)$, where
$\bx=(x_1,\dots,x_n)$, $\by=(y_1,\dots,y_n)$, are given by the contact vector field $X_H$ with the generating function $H=H(\bx,\by,u)$.
Taking quadratic functions $H$ with weights $w(\bx)=1$, $w(\by)=1$, $w(u)=2$ results in the conformally symplectic Lie algebra, which
integrates to the conformally symplectic group $G_1=\op{CSp}(2n,\R)$ (taking $H$ of degree $\leq2$ results in the affine extension of it by the
Heisenberg group).

Alternatively, one considers the natural lift of the linear action of $G=\op{Sp}(2n,\R)$ on $V=\R^{2n}$ to the contactization $M$
and makes a central extension of it. We will discuss the invariants of this action. Note that this action is no longer linear, so the
invariants cannot be taken to be polynomial, but can be assumed rational.

\subsubsection{The case $n=1$}
In the 3-dimensional case the group $G_1=\op{GL}(2,\R)$ acts on $M=\R^3(x,y,u)$ as follows:
 \begin{multline*}
G_1\ni g=\begin{pmatrix}\alpha & \beta\\ \gamma & \delta \end{pmatrix}:
(x,y,u)\mapsto (\alpha x + \beta y, \gamma x + \delta y, f(x,y,u)),\\
\text{where } f(x,y,u)=(\alpha \delta - \beta \gamma)\left(u - \frac{xy}{2}\right) + \frac{(\alpha x + \beta y)(\gamma x + \delta y)}{2}.\quad
 \end{multline*}

This action is almost transitive (no invariants), however
there are singular orbits and a relative invariant $R=xy-2u$.
Extending the action to multiple copies of $M$, i.e.\
considering the diagonal action of $G_1$ on $M^{\times m}$,
results in $m$ copies of this relative invariant, but also in the lifted invariants from various $V^{\times 2}$:
 $$
R_k=x_ky_k-2u_k\ (1\leq k\leq m),\quad R_{ij}=x_iy_j-x_jy_i\ (1\leq i<j\leq m).
 $$
These are all relative invariants of the same weight, therefore their ratios are absolute invariants:
 $$
T_k=\frac{R_k}{R_m}\ (1\leq k< m),\quad
T_{ij}=\frac{R_{ij}}{R_m}\ (1\leq i<j\leq m).
 $$
Since $u_k$ enter only $R_k$ there are no relations involving those, and the relations on $T_{ij}$ are the same as for $a_{ij}$,
namely they are Pl\"ucker relations (since those are homogeneous, they are satisfied by both $R_{ij}$ and $T_{ij}$).
As previously, we can use them to eliminate all invariants except for $\bar{\bT}=\{T_k,T_{1i},T_{2i}\}$:
 \[
T_{kl}=\frac{T_{1k}T_{2l}-T_{1l}T_{2k}}{T_{12}},\quad 3\leq k<l\leq m.
 \]
The field of rational invariants for $m>1$ is then described as follows:
 \[
\R(x,y,u) \supset \cF_{G_1}^m \simeq \R(\bar{\bT}) \overset{3m - 4}{\supset} \R.	
 \]

\subsubsection{The general case}

In general we also have no invariants on $M$ and the following relative invariants on $M^{\times m}$
 $$
R_k=\bx_k\by_k-2u_k\ (1\leq k\leq m),\quad R_{ij}=\bx_i\by_j-\bx_j\by_i\ (1\leq i<j\leq m)
 $$
resulting in absolute invariants $T_k,T_{ij}$ given by the same formulae.
Again using the Pfaffian relations we can rationally eliminate
superfluous generators, and denote the resulting set by $\bar{\bT}=\{T_k,T_{ij}:1\leq k<m,i<j\leq m,1\leq i\leq 2n\}$.
This set is independent and contains $\bar{d}(m,n)$ elements, where
 $$
\bar{d}(m,n)=\left\{
\begin{array}{ll} (2n+1)m-n(2n+1)-1 & \text{for }m\ge2n\\
\binom{m}2+m-1=\binom{m+1}2-1 &\text{for }m\leq2n.\end{array}\right.
 $$

This $\bar{d}(m,n)$ is thus the transcendence degree of the field of rational invariants:
\begin{framed}[200pt]
\vspace{-5pt}
 \[
\R(\bx,\by,u) \supset \cF_{G_1}^m \simeq \R(\bar{\bT}) \overset{\bar{d}(m,n)}{\supset} \R.	
 \]
\end{framed}

\section{From joint to differential invariants}

When we pass from finite to continuous objects the equivalence
problem is solved through differential invariants.
In \cite{JK} this was done for submanifolds and functions with respect
to our groups $G$. After briefly recalling the results, we will demonstrate how to perform the discretization in several different cases.

\subsection{Jets of curves in symplectic vector spaces}

Locally a curve in $\R^{2n}$ is given
as $\bu=\bu(t)$ for $t=x_1$ and $\bu=(x_2,\dots,x_n,y_1,\dots,y_n)$
in the canonical coordinates $(x_1,x_2,\dots,x_n,y_1,\dots,y_n)$,
$\omega=dx_1\wedge dy_1+\dots+dx_n\wedge dy_n$.
The corresponding jet-space $J^\infty(V,1)$ has coordinates
$t,\bu,\bu_t,\bu_{tt},\dots$, and $J^k$ is the truncation of it.
For instance, $J^1(V,1)=\R^{4n-1}(t,\bu,\bu_t)$.
Note that $\dim J^k(V,1)=2n+k(2n-1)$.

\smallskip

In the case of dimension $2n=2$, the jet-space is
$J^k(V,1)=\R^{k+2}(x,y,y_x,\dots,y_{x..x})$.
Here $G=\op{Sp}(2,\R)$ has an open orbit in $J^1(V,1)$,
and the first differential invariant is of order 2:
 $$
I_2=\frac{y_{xx}}{(xy_x-y)^3}.
 $$
There is also an invariant derivation ($\mathcal{D}_x$ is the total derivative wrt $x$)
 $$
\nabla=\frac1{xy_x-y}\mathcal{D}_x.
 $$
By differentiation we get new differential invariants
$I_3=\nabla I_2$, $I_4=\nabla^2 I_2$, etc. The entire algebra
of differential invariants is free:
 $$
\mathcal{A}_G=\la I_2\,;\,\nabla\ra.
 $$

In the general case we denote the canonical coordinates
on $V=\R^{2n}$ by $(t,\bx,y,\bz)$, where $\bx$ and $\bz$ and
$(n-1)$-dimensional vectors. $G=\op{Sp}(2n,\R)$ acts on $J^\infty(V,1)$. 
The invariant derivation is equal to
 $$
\nabla=\frac1{(ty_t-y+\bx\bz_t-\bx_t\bz)}\mathcal{D}_t.
 $$
and the first differential invariant of order 2 is
 $$
I_2=\frac{\bx_t\bz_{tt}-\bx_{tt}\bz_t+y_{tt}}{(ty_t-y+\bx\bz_t-\bx_t\bz)^3}.
 $$
There is one invariant $I_3$ of order 3 independent of $I_2,\nabla(I_2)$,
one invariant $I_4$ of order 4 independent of $I_2,\nabla(I_2),I_3,
\nabla^2(I_2),\nabla(I_3)$, and so on up to order $2n$.
Then the algebra of differential invariants of $G$ is freely generated \cite{JK} so:
 $$
\mathcal{A}_G=\la I_2,I_3,\dots,I_{2n}\,;\, \nabla\ra.
 $$

\subsection{Symplectic discretization}\label{sympldscr}

Consider first the case $n=1$ with coordinates $(x,y)$ on
$V=\R^2$. Let $A_i=(x_i,y_i)$, $i=0,1,2$, be three close points lying
on the curve $y=y(x)$. We assume $A_1$ is in between $A_0,A_2$ and
omit indices for its coordinates, i.e.\ $A_1=(x,y)$.

Let $x_0=x-\delta$ and $x_2=x+\epsilon$.
Denote also $y'=y'(x)$, $y''=y''(x)$, etc.
Then from the Taylor formula we have:
 \begin{gather*}
y_0=y-\delta y'+\tfrac12\delta^2y''-\tfrac16\delta^3y'''+o(\delta^3), \\
y_2=y+\epsilon y'+\tfrac12\epsilon^2y''+\tfrac16\epsilon^3y'''+o(\epsilon^3).
 \end{gather*}
Therefore the symplectic invariants $a_{ij}=x_iy_j-x_jy_i$ are:
 \begin{align*}
a_{12} \,&= \epsilon(xy'-y)+\tfrac12\epsilon^2 xy''+
\tfrac16\epsilon^3xy'''+o(\epsilon^3), \\
a_{01} \,&= \delta(xy'-y)-\tfrac12\delta^2 xy''+
\tfrac16\delta^3xy'''+o(\delta^3), \\
a_{02} \,&= (\epsilon+\delta)(xy'-y)+\tfrac12(\epsilon^2-\delta^2)xy''\\
& +\tfrac16(\epsilon^3+\delta^3)xy'''
-\tfrac12(\epsilon+\delta)\epsilon\delta y''+o((|\delta|+|\epsilon|)^3).
 \end{align*}
This implies:
 $$
\frac{a_{01}-a_{02}+a_{12}}{a_{01}a_{02}a_{12}}=
\frac12\frac{y''}{(xy'-y)^3}+o(|\delta|+|\epsilon|).
 $$
Thus we can extract the invariant exploiting no distance
(like $\epsilon=\delta$) but only the topology ($\epsilon,\delta\to0$)
and the symplectic area. This works in any dimension, and using the coordinates
from the previous subsection we get
 $$
\lim_{A_0,A_2\to A_1}\frac{\op{Area}_\omega(A_0A_1A_2)}
{\op{Area}_\omega(OA_0A_1)\op{Area}_\omega(OA_0A_2)\op{Area}_\omega(OA_1A_2)}
=\frac{2(\bx_t\bz_{tt}-\bx_{tt}\bz_t+y_{tt})}
{(ty_t-y+\bx\bz_t-\bx_t\bz)^3}=2I_2.
 $$

Similarly we obtain the invariant derivation (it uses only two points
and hence is of the first order)
 $$
\lim_{A_0\to A_1}\frac{\overrightarrow{A_0A_1}}{\op{Area}_\omega(OA_0A_1)}
=\frac{2\mathcal{D}_t}{(ty_t-y+\bx\bz_t-\bx_t\bz)}=2\nabla.
 $$
The other generators $I_3,I_4,\dots$ (important for $n>1$)
can be obtained by a higher order discretization,
but the formulae become more involved.

\subsection{Contact discretization}

Now we use joint invariants to obtain differential invariants of curves
in contact 3-space $W=\R^3(x,y,u)$ with respect to the group
$G=\op{GL}(2,\R)$, acting as in \S\ref{cont}.
The curves will be given as $y=y(x),u=u(x)$ and their jet-space is $J^k(W,1)=\R^{2k+3}(x,y,u,y_x,u_x,\dots,y_{x..x},u_{x..x})$.
The differential invariants are generated in the Lie-Tresse sense
\cite{JK} as
 $$
\mathcal{A}_G=\la I_1,I_2\,;\,\nabla\ra.
 $$
where
 $$
I_1=\frac{u_x-y}{xy_x-y}\,,\quad I_2=\frac{(xy-2u)^2}{(xy_x-y)^3}\,y_{xx}\,,
\quad\nabla=\frac{xy-2u}{xy_x-y}\,\mathcal{D}_x.
 $$

Instead of exploiting the absolute rational invariants $T_i,T_{ij}$
we will work with the relative polynomial invariants $R_i,R_{ij}$ from
\S\ref{cont}. To get absolute invariants we will then have to pass to
weight zero combinations.

Consider three close points $\hat A_i=(x_i,y_i,u_i)$, $i=0,1,2$,
lying on the curve. We again omit indices for the middle point, so
$x_0=x-\delta$, $x_1=x$ and $x_2=x+\epsilon$. Using the Taylor decomposition
as in the preceding subsection, we obtain
 \begin{alignat*}{2}
& R_1= xy-2u,\quad &&
R_0-R_1=\delta(2u'-y-xy')+o(\delta),\\
& R_{01}=\delta(xy'-y)+o(\delta),\quad &&
R_{02}=(\epsilon+\delta)(xy'-y)+o(|\epsilon|+|\delta|),\\
& R_{12}=\epsilon(xy'-y)+o(\epsilon),\quad &&
R_{01}+R_{12}-R_{02}=\tfrac12\epsilon\delta(\epsilon+\delta)y''+
o((|\epsilon|+|\delta|)^3)
 \end{alignat*}
as well as
 $$
\overrightarrow{A_0A_1}=\delta(\p_x+y'\p_y+u'\p_y)+o(\delta).
 $$
Passing to jet-notations,
we obtain the limit formulae for basic differential invariants:
 \begin{gather*}
I_1=\lim_{A_0\to A_1}\frac{R_0-R_1}{2R_{01}}+\frac12=
\lim_{A_0\to A_1}\frac{T_0-1+T_{01}}{2T_{01}},\\
\frac12I_2=\lim_{A_0,A_2\to A_1}
\frac{R_1^2(R_{01}+R_{12}-R_{12})}{R_{01}R_{02}R_{12}}
=\lim_{A_0,A_2\to A_1}\frac{T_{01}+T_{12}-T_{12}}{T_{01}T_{02}T_{12}},\\
\nabla=\lim_{A_0\to A_1}\frac{R_1}{R_{01}}\overrightarrow{A_0A_1}
=\lim_{A_0\to A_1}\frac{\overrightarrow{A_0A_1}}{T_{01}}.
 \end{gather*}

These formulae straightforwardly generalize to invariants of jets of curves
in contact manifolds of dimension $2n+1$, $n>1$, in which case there
are also other generators obtained by higher order discretizations.

\subsection{Functions and other examples}

Let us discuss invariants of jets of functions on the symplectic plane.
The action of $G=\op{Sp}(2,\R)$ on $J^0V=V\times\R(u)\simeq\R^3(x,y,u)$,
with $I_0=u$ invariant, prolongs to $J^\infty(V)=\R^\infty(x,y,u,u_x,u_y,u_{xx},u_{xy},u_{yy},\dots)$.
Note that functions can be identified as surfaces in $J^0V$ through
their graphs.

For any finite set of points $\hat A_k=(x_k,y_k,u_k)$ the values $u_k$
are invariant, and the other invariants $a_{ij}$ are obtained from the
projections $A_k=(x_k,y_k)$. In this way we get the basic first order
invariant (as before we omit indices $x_1=x$, $y_1=y$, $u_1=y$ for
the reference point $A_1$ in the right hand side)
 $$
I_1=\lim_{A_0,A_2\to A_1}
\frac{a_{01}(u_1-u_2)+a_{12}(u_1-u_0)}{a_{01}-a_{02}+a_{12}}=
xu_x+yu_y
 $$
as well as two invariant derivations
 \begin{equation*}
\nabla_1=\overrightarrow{OA_1}=x\mathcal{D}_x+y\mathcal{D}_y,\quad
\nabla_2=\lim_{A_0\to A_1}\frac{I_1}{a_{01}}\,\overrightarrow{A_0A_1}-
\frac{u_1-u_0}{a_{01}}\,\overrightarrow{OA_1}=u_x\mathcal{D}_y-u_y\mathcal{D}_x.
 \end{equation*}

To obtain the second order invariant
$I_{2c}=u_x^2u_{yy}-2u_xu_yu_{xy}+u_y^2u_{xx}$ let $A_0$ belong to
the line through $A_1$ in the direction $\nabla_2$
(this constraint reduces the second order formula to depend on only
two points), i.e.\ $A_0=(x+\epsilon u_y,y-\epsilon u_x)$, $A_1=(x,y)$.
Then $u_0-u_1=\frac{\epsilon^2}2I_{2c}+o(\epsilon^2)$,
$a_{01}=\epsilon I_1$ and letting $\epsilon\to0$ we obtain
 $$
\lim_{\begin{subarray}{c}A_0\to A_1\\ A_0A_1\|\nabla_2\end{subarray}}
\frac{u_0-u_1}{a_{01}^2}=\frac{I_{2c}}{2I_1^2}.
 $$
In the same way we get 
$I_{2a}= x^2u_{xx}+2xyu_{xy}+y^2u_{yy}$ and 
$I_{2b}= xu_yu_{xx}-yu_xu_{yy}+(yu_y-xu_x)u_{xy}$.
These however are not required as the algebra of differential invariants
is generated as follows \cite{JK} (for some differential syzygies $\mathcal{R}_i$)
 $$
\mathcal{A}_G=\la I_0,I_{2c}\,;\,\nabla_1,\nabla_2\,|\,\mathcal{R}_1,
\mathcal{R}_2,\mathcal{R}_3\ra.
 $$

Similarly one can consider surfaces in the contact 3-space
(with the same coordinates $x,y,u$ but different lift of
$\op{Sp}(2,\R)$ extended to $\op{GL}(2,\R)$) and
higher-dimensional cases.
The idea of discretization of differential invariants applies
to other problems treated~in~\cite{JK}.

\section{Relation to binary and higher order forms}

According to the Sylvester theorem \cite{S} a general binary form
$p\in\C[x,y]$ of odd degree $2m-1$ with complex coefficients
can be written as
 $$
p(x,y)=\sum_{i=1}^m(\alpha_i x+\beta_i y)^{2m-1}.
 $$
This decomposition is determined up to permutation of linear factors and
independent multiplication of each of them by a $(2m-1)$-th root of unity.

In other words, we have the branched cover of order $k_m=(2m-1)^mm!$
 $$
\times^m(\C^2)\to S^{2m-1}\C^2
 $$
and the deck group of this cover is $S_m\ltimes\Z_{2m-1}^{\times m}$.

Note that in the real case, due to uniqueness of the odd root of unity,
the corresponding cover over an open subset of the base
 $$
\times^m(\R^2)\to S^{2m-1}\R^2
 $$
has the deck group $S_m$.

With this approach the invariants of real binary forms are precisely
the joint symmetric invariants studied in this paper, and for
complex forms one has to additionally quotient by $\Z_{2m-1}^{\times m}$,
which is equivalent to passing from $a_{ij}$ to $a_{ij}^{2m-1}$
and other invariant combinations
(example for $m=4$: $a_{12}^3a_{13}^2a_{14}^2a_{23}^2a_{24}^2a_{34}^3$)
and subsequently averaging by the map $\pi$.

Other approaches to classification of binary forms, most importantly
through differential invariants \cite{Ol,BL}, can be related to
this via symplectic discretization.

 \begin{remark}
Note that the standard "root cover" $\C^{2m}\to S^{2m-1}\C^2$:
 $$
(a_0,a_1,\dots,a_{2m-1})\mapsto(p_0,p_1,\dots,p_{2m-1}),\quad
\sum_{i=0}^{2m-1}p_ix^iy^{2m-i-1}=a_0\prod_{i=1}^{2m-1}(x-a_iy)
 $$
has order $(2m-1)!<k_m$. Polynomial $\op{SL}(2,\C)$-invariants
of binary forms with this approach correspond to
functions on the orbifold $\C^{2m}/S_{2m}$.
 \end{remark}

The above idea extends further to ternary and higher valence forms
(see \cite{BL2} for the differential invariants approach
and \cite{PO} for an approach using joint differential invariants)
with the Waring decompositions \cite{AH} as the cover,
but here the group $G$ is no longer symplectic.
We expect all the ideas of the present paper to generalize
to the linear and affine actions of other reductive groups $G$.

\bigskip

\textsc{Acknowledgements.}
FA thanks J\o rn Olav Jensen for stimulating conversations and feedback.
BK thanks Pavel Bibikov, Eivind Schneider and Boris Shapiro for
useful discussions.
The publication charges for this article have been funded by a grant
from the publication fund of UiT the Arctic University of Norway.


\end{document}